%
%
%
\documentclass[12pt]{amsart}

\newcounter{defcounter}
\setcounter{defcounter}{0}

\usepackage{amssymb,amsmath,amscd,graphicx, enumitem,
latexsym,amsthm}
\usepackage{amssymb,latexsym,eufrak,amsmath,amscd,graphicx}
  \usepackage[all]{xy}
  \usepackage{pdfsync}
\setlength{\textwidth}{6.4in}
\setlength{\topmargin} {-.3 in}
\setlength{\evensidemargin}{0 in} 
\setlength{\oddsidemargin}{0 in}
\setlength{\footskip}{.3 in}
\setlength{\headheight}{.3 in}
\setlength{\textheight}{8.65  in}
\setlength{\parskip}{.13in minus .03in} 

\setlength{\parindent}{.3 in}
 
\theoremstyle{plain}
\newtheorem{theorem}{Theorem}
\newtheorem{proposition}[theorem]{Proposition}
\newtheorem{corollary}[theorem]{Corollary}

\newtheorem{proposition.definition}[theorem]{Proposition/Definition}

\newtheorem{theoremalpha}{Theorem}

\newtheorem{propositionalpha}[theoremalpha]{Proposition}

\theoremstyle{definition}

\newtheorem{remark}[theorem]{Remark}

\newcommand{\lra}{\longrightarrow}

\newcommand{\noi}{\noindent}
\newcommand{\PP}{\mathbf{P}}

\newcommand{\ZZ}{\mathbf{Z}}

\newcommand{\OO}{\mathcal{O}}

\newcommand{\fra}{\mathfrak{a}}

\newcommand{\HH}[3]{H^{{#1}} \big( {#2} , {#3}
\big) }

\newcommand{\hh}[3]{h^{{#1}} \big( {#2} , {#3}
\big) }

\newcommand{\pr}{\prime}

\newcommand{\lin}{\equiv_{\text{lin}}}

\newcommand{\dra}{\dashrightarrow}

\newcommand{\Linser}[1]{| \mspace{1.5mu} {#1}
\mspace{1.5mu} |}
\newcommand{\linser}[1]{\Linser{  {#1}  }}

\newcommand{\Konno}{\textnormal{Konno}}

\hyphenation{Cast-el-nuovo-Mum-ford}

\numberwithin{theorem}{section}
\numberwithin{equation}{section}

\begin{document}

\title
{The Konno invariant of some algebraic varieties}

 \author{Lawrence Ein}
  \address{Department of Mathematics, University of Illinois at Chicago, 851 South Morgan St., Chicago, IL  60607}
 \email{{\tt ein@uic.edu}}
 \thanks{Research of the first author partially supported by NSF grant DMS-1801870.}
 
 \author{Robert Lazarsfeld}
  \address{Department of Mathematics, Stony Brook University, Stony Brook, New York 11794}
 \email{{\tt robert.lazarsfeld@stonybrook.edu}}
 \thanks{Research of the  second author partially supported by NSF grant DMS-1739285.}

\maketitle

 \section*{Introduction}

Let $X$ be a smooth complex projective variety of dimension $n \ge 2$. We define the \textit{Konno invariant} of $X$ to be the minimal geometric genus of 
a pencil of connected divisors on $X$:
\[
\Konno(X) \ = \ \min \Big\{ \, g \, \Big | \ 
  \begin{minipage}{3in}{ $\exists$ a connected rational pencil 
\ $ \pi : X \dashrightarrow \PP^1 $ \ whose general fibre $F$ has $p_g(F) = g$ }\end{minipage}
\ \Big \}.\]
(The geometric genus of an  irreducible projective variety is understood to be the $p_g$ of any desingularization.) This invariant was introduced and studied by Konno \cite{Konno}, who computed it for smooth surfaces in $\PP^3$: he proves that in this case  pencils of minimal genus are given by projection from a line. In general, one should view $\Konno(X)$ as one of many possible measures of the ``complexity" of $X$. The purpose of this note is to estimate this invariant for some natural classes of varieties.

Our first result involves the Konno invariant of varieties such as general complete intersections whose Picard groups are generated by a very ample divisor.
\begin{propositionalpha} \label{ThmA}
Assume that $\textnormal{Pic}(X)= \ZZ \cdot [H]$ where $H$ is a very ample divisor on $X$. Then
\[
h^0(K_X) \, - \, h^0(K_X - H) \ \le \ \Konno(X) \ \le \ h^0(K_X + H) - h^0(K_X) \, + \, h^1(K_X).
\]
\end{propositionalpha}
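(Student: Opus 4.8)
The plan is to prove the two inequalities separately, in each case combining adjunction with the cohomology of a structure sequence.

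For the upper bound, I would first note that the hypothesis $\mathrm{Pic}(X)=\ZZ\cdot[H]$ forces every member of $|H|$ to be reduced and irreducible, hence connected: a splitting $A+B$ into nonzero effective divisors would give $A\sim aH$ and $B\sim bH$ with $a+b=1$, $a,b\ge1$, which is absurd. Consequently a general pencil $\pi\colon X\dra\PP^1$ contained in the very ample system $|H|$ is a connected rational pencil, and by Bertini its general fibre $F$ is a smooth member of $|H|$. Since $K_F=(K_X+H)|_F$ by adjunction, $p_g(F)=\HH{0}{F}{(K_X+H)|_F}$, and the cohomology sequence of
\[
0\lra\OO_X(K_X)\lra\OO_X(K_X+H)\lra\OO_F\big((K_X+H)|_F\big)\lra0
\]
bounds this by $\big(h^0(K_X+H)-h^0(K_X)\big)+h^1(K_X)$; therefore $\Konno(X)\le p_g(F)$ is at most the claimed amount.

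For the lower bound, let $\pi\colon X\dra\PP^1$ be an arbitrary connected rational pencil with general fibre $F$ and $p_g(F)=g$; the target is $g\ge h^0(K_X)-h^0(K_X-H)$. I would resolve the indeterminacy of $\pi$ by a composition of blow-ups along smooth centres $\mu\colon X'\to X$, so that $f=\pi\circ\mu\colon X'\to\PP^1$ is a morphism whose general fibre is connected. For general $t$ the fibre $F'=f^{-1}(t)$ is then smooth (generic smoothness) and irreducible, and $\mu|_{F'}\colon F'\to F$ is a proper birational morphism — a resolution of singularities — so that $p_g(F')=g$; in particular the general fibre of a connected rational pencil is automatically reduced and irreducible. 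As $F'$ is a fibre of $f$ its normal bundle is trivial, so adjunction gives $K_{F'}=K_{X'}|_{F'}$, and the cohomology of
\[
0\lra\OO_{X'}(K_{X'}-F')\lra\OO_{X'}(K_{X'})\lra\OO_{F'}(K_{F'})\lra0
\]
yields $g=h^0(K_{F'})\ge h^0(K_{X'})-h^0(K_{X'}-F')$.

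It then remains to identify the two terms on the right. The first is $h^0(K_X)$ by birational invariance of the geometric genus. For the second, let $F_t=\mu_*F'$ be the corresponding member of the pencil on $X$; since $\mathrm{Pic}(X)=\ZZ\cdot[H]$ and $F_t$ is a nonzero effective divisor, $F_t\sim eH$ with $e\ge1$. Comparing $F'$ with $\mu^*F_t$ gives $\mu^*F_t=F'+R$ with $R\ge0$ and $\mu$-exceptional, and writing $K_{X'}=\mu^*K_X+E$ with $E\ge0$ and $\mu$-exceptional yields $K_{X'}-F'\sim\mu^*(K_X-eH)+(E+R)$. Since $E+R$ is effective and $\mu$-exceptional, a section upstairs restricts to a section of $K_X-eH$ over the complement of $\mu(\exc(\mu))$, which has codimension $\ge2$, and then extends; hence $h^0(K_{X'}-F')=h^0(K_X-eH)\le h^0(K_X-H)$, the last step because $(e-1)H$ is effective. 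Combining and minimizing over $\pi$ gives the lower bound. The Bertini and generic-smoothness inputs are routine; the point I expect to demand the most care is this bookkeeping on $X'$ — checking that $F'$ is genuinely a resolution of $F$ with trivial normal bundle, and that descending from $K_{X'}-F'$ to $K_X-eH$ loses no sections — and it is exactly here that the hypotheses $\mathrm{Pic}(X)=\ZZ\cdot[H]$ and the effectivity of $H$ are used.
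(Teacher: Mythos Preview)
Your argument is correct and follows essentially the same approach as the paper. The only organizational difference is that for the lower bound the paper runs the structure sequence on $X$ (showing $p_g(F)\ge h^0\big(F,\OO_F(K_X)\big)\ge h^0(K_X)-h^0(K_X-F)$ directly), whereas you run it on $X'$ and then descend the two $h^0$'s; both routes use the same adjunction/effective-ramification input and the same minimization over $e\ge 1$.
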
 
\noi So for example if
\[    X_d \ \subseteq \ \PP^{n+1}\]
is a hypersurface of  degree $d$ (very general if $n = 2$), then as a function of $d$
 \[ \Konno(X_d) \sim \frac{d^n}{n!}. \]
Observe that at least when $H^1(K_X) = 0$, the upper bound in Proposition A is the geometric genus of a general pencil in $\linser{H}$. However if $h^0(X, H) \gg n$, then one can construct special pencils of highly singular hypersurfaces of somewhat smaller geometric genus.

Our second result deals with polarized K3 surfaces of large degree.
\begin{theoremalpha} \label{K3ThmIntro}
Let $(S_d, L_d)$ be a polarized K3 surface of genus $d\ge 3$, and assume that 
\[ \textnormal{Pic}(S_d) \ = \ \ZZ \cdot [L_d]. \]
Then
\[ \Konno(S_d) \, \in \, \Theta\big(\sqrt{d}\big), \]
i.e. there are constants $C_1, C_2> 0$ such that
\[ C_1 \cdot \sqrt{d}\ < \ \Konno(S_d) \ < \ C_2 \cdot \sqrt{d} \]
for all such surfaces $S_d$ and all large $d$. 
\end{theoremalpha}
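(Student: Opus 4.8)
I would split the argument into the upper and lower bounds, both resting on one structural remark. Since $\textnormal{Pic}(S_d)=\ZZ\cdot[L_d]$, any connected rational pencil on $S_d$ is an honest linear pencil $P$ all of whose members lie in $|kL_d|$ for a fixed integer $k\ge 1$, with general member an irreducible curve $F$ of arithmetic genus $p_a(F)=\tfrac12 F\cdot(F+K_{S_d})+1=k^2(d-1)+1$. Write $L=L_d$; recall $L$ is very ample, $h^0(S_d,L)=d+1$, $h^1(S_d,L)=0$. The key point I would use throughout the lower bound is that every singular point of the general member $F$ is a base point of $P$: resolving the indeterminacy of $P$ by a sequence of point blow-ups $\mu\colon\widetilde S\to S_d$ produces a fibration $\phi\colon\widetilde S\to\PP^1$ whose general fibre $\widetilde F$ is \emph{smooth} (generic smoothness), with $\mu|_{\widetilde F}\colon\widetilde F\to F$ the normalization; hence $\mu$ desingularizes $F$ by blowing up the base scheme $B=\textnormal{Bs}(P)$ together with finitely many of its infinitely near points, all of which are independent of the member chosen.

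\smallskip
\noindent\emph{Upper bound.} I would produce a low-genus pencil by forcing a single point of high multiplicity. Fix $p\in S_d$ and let $m=m(d)$ be the largest integer with $\binom{m+1}{2}\le d-1$, so $m\sim\sqrt{2d}$. Since $\textnormal{length}(\cO_{S_d}/\mathfrak m_p^{\,m})=\binom{m+1}{2}$, the system $|L\otimes\mathfrak m_p^{\,m}|$ has dimension $\ge d-\binom{m+1}{2}\ge 1$, so it contains a pencil $P$; take $P$ general. By Bertini on the blow-up of $p$ and of the residual base points, the general member $F$ of $P$ is irreducible, has an \emph{ordinary} $m$-fold point at $p$, and is smooth elsewhere (ordinariness at $p$ because the degree-$m$ tangent cones at $p$ cut out by the system move). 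Hence $P$ is a connected rational pencil with $p_g(F)=p_a(F)-\binom{m}{2}=d-\binom{m}{2}$. Using $\binom{m}{2}=\binom{m+1}{2}-m$ and maximality of $m$ (so $\binom{m+1}{2}>d-1-(m+1)$) one gets $p_g(F)<2m+2$, whence $\Konno(S_d)\le d-\binom{m(d)}{2}<2m(d)+2<C_2\sqrt d$ for a suitable $C_2$ and all large $d$.

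\smallskip
\noindent\emph{Lower bound.} Let $P$ be an arbitrary connected pencil, with $\mu,\phi,\widetilde F,F,B$ as above and members in $|kL|$; put $g=p_g(F)$ and let $\delta=p_a(F)-g$ be the total $\delta$-invariant of $F$, so $g=k^2(d-1)+1-\delta$. At a singular point $p$ of $F$ let $m_0\ge m_1\ge\cdots$ be the generic multiplicities of members of $P$ at $p$ and at the infinitely near points of $p$ blown up by $\mu$ — these are well-defined and member-independent, since a moving singularity or moving infinitely near datum could not be removed by blowing up the fixed scheme $B$ and would contradict smoothness of the general fibre of $\phi$. Then $\delta_p=\sum_i\binom{m_i}{2}$, while for two general members $F,F'$ of $P$
\[
\textnormal{length}_p(B)\;=\;I_p(F,F')\;\ge\;\sum_i m_i^{\,2}\;=\;2\delta_p+\sum_i m_i\;\ge\;2\delta_p+\sqrt{2\delta_p},
\]
using $m_i\ge\sqrt{2\binom{m_i}{2}}$ and $\sum_i\sqrt{a_i}\ge\sqrt{\sum_i a_i}$. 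Summing over the finitely many singular points of $F$ and using $\textnormal{length}(B)=(kL)^2=2k^2(d-1)$ gives $2\delta+\sqrt{2\delta}\le 2k^2(d-1)$; substituting $\delta=k^2(d-1)+1-g$ forces $\sqrt{2\delta}\le 2(g-1)$, hence $2k^2(d-1)\le 4g^2-6g+2$, and therefore $g>\tfrac{1}{\sqrt2}\sqrt{d-1}>C_1\sqrt d$ for all large $d$.

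\smallskip
\noindent The step I expect to require the most care is the claim, in the lower bound, that the singular points of the general member of $P$ and all infinitely near points needed to resolve them are fixed, with member-independent multiplicities — this is exactly what licenses the computation of $\textnormal{length}_p(B)$ via $\sum_i m_i^{\,2}$. Pinning it down means tracking the blow-up tree of $\mu$ and arguing that any residual moving singularity survives into the general fibre of $\phi$, against generic smoothness. A secondary point, on the upper-bound side, is the Bertini-type verification that the general member of the chosen pencil acquires only an ordinary $m$-fold point at $p$ and no further singularities.
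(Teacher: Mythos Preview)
Your proposal is correct and follows essentially the same route as the paper: both the upper and lower bounds rest on Noether's formulas expressing $(L^2)$ and $2p_a(C)-2g$ in terms of the multiplicities $m_i$ of the pencil at its (infinitely near) base points, combined with the elementary inequality $\sum m_i \ge \sqrt{\sum m_i^2}$; the paper packages the lower bound as the clean statement $(2g-2)\ge (K_S\cdot L)+\sqrt{(L^2)}$, while you reach the same conclusion via $\delta$ and local intersection lengths. On the two points you flagged: the member-independence of the $m_i$ is automatic once they are defined as the multiplicities of the \emph{generic} curve of the proper-transformed system at each blow-up center (this is exactly how the paper's Noether proposition is set up), and the ordinariness/smoothness claim in the upper bound is unnecessary, since any extra or worse singularity only decreases $p_g(F)$, so $p_g(F)\le d-\binom{m}{2}$ holds regardless --- the paper simply writes this inequality directly.
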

\noi It is conjectured by Stapleton \cite{Stapleton} that the same statement holds for the degree of irrationality $\textnormal{irr}(S_d)$ of $S_d$, but this remains an intriguing open problem. An analogue of Theorem B is also valid for general polarized abelian surfaces.

The proof of Proposition \ref{ThmA} occupies \S 1. It arises as a special case of a somewhat more general (but very elementary) result dealing with one-dimensional families of hypersurfaces. Section 2 is devoted to a more refined lower bound for surfaces, from which we deduce Theorem \ref{K3ThmIntro};  following  \cite{Konno}, the key point here is to use some classical statements of Noether computing the invariants of a linear series in terms of its multiplicities at finite and infinitely near points. We conclude with an appendix in which we review Noether's formulae, and show in particular how they lead to quick proofs of theorems of Deligne-Hoskin and Lech concerning finite colength ideals on a surface.

We are grateful to Francesco Bastianelli, Craig Huneke, David Stapleton  and Ruijie Yang for useful discussions. 

\section{Geometric genera of covering families of divisors}

Let $X$ be a smooth complex projective variety of dimension $n$. 

\begin{theorem} \label{Sect1Thm}
Let $\{ F_t\}_{t \in T}$ be a family of divisors on $X$ parametrized by a smooth curve $T$. Assume that the $F_t$ are generically irreducible and that they cover $X$,  and denote by $F \subseteq X$ a general element in the family.
Then
\begin{equation}  \label{Thmeqn} p_g(F) \ \ge \ \hh{0}{X}{\OO_X( K_X)} \, - \, \hh{0}{X}{\OO_X( K_X - F)}.\end{equation}
\end{theorem}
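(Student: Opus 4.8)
The plan is to exploit the fact that a covering family of divisors gives, after normalizing the base and possibly passing to a resolution, a dominant rational map whose generic fibre is (birational to) the general member $F$, and then to compare holomorphic forms on $X$ with those on $F$ via restriction. First I would replace $T$ by its normalization and the total space by a resolution of the incidence variety $\mathcal{F} = \{(x,t) : x \in F_t\} \subseteq X \times T$, obtaining a smooth projective variety $Y$ with a surjective morphism $p \colon Y \to X$ (generically finite, in fact birational onto $X$ since the $F_t$ cover $X$ and a general point lies on only finitely many — actually one needs only that $p$ is dominant, generically finite) and a morphism $q \colon Y \to T$ whose general fibre $\widetilde{F}$ is a desingularization of the general $F_t$, so $p_g(F) = h^0(\widetilde F, K_{\widetilde F})$.

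The key step is to produce an injection
\[
\frac{H^0(X, \mathcal{O}_X(K_X))}{\text{(forms vanishing on } F)} \ \hookrightarrow \ H^0(F, \mathcal{O}_F(K_F))
\]
whose source has dimension exactly $h^0(X,K_X) - h^0(X, K_X - F)$. Concretely, the short exact sequence $0 \to \mathcal{O}_X(K_X - F) \to \mathcal{O}_X(K_X) \to \mathcal{O}_F(K_X)|_F \to 0$ shows that the image of the restriction map $H^0(X,K_X) \to H^0(F, \mathcal{O}_X(K_X)|_F)$ has dimension $h^0(X,K_X) - h^0(X, K_X - F)$. Now $\mathcal{O}_X(K_X)|_F = \mathcal{O}_F(K_F - N_{F/X}) \subseteq \mathcal{O}_F(K_F)$ after tensoring by the natural section of the normal bundle is not quite right; the clean statement is adjunction: $K_F = (K_X + F)|_F$, so there is an inclusion $\mathcal{O}_X(K_X)|_F \hookrightarrow \mathcal{O}_F(K_F)$ precisely because $F$ is effective (multiply by the tautological section of $\mathcal{O}_X(F)|_F$), hence $H^0(F, \mathcal{O}_X(K_X)|_F) \hookrightarrow H^0(F, \mathcal{O}_F(K_F)) = H^0(\widetilde F, K_{\widetilde F})$ since $F$ is normal — or, more safely, pull everything back to the desingularization $\widetilde F$ and note that a section of $K_X|_F$ pulls back to a section of $K_{\widetilde F}$ that does not identically vanish if the original did not. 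Composing, a subspace of $H^0(X,K_X)$ of dimension $h^0(X,K_X) - h^0(X,K_X - F)$ maps with trivial kernel into $H^0(\widetilde F, K_{\widetilde F})$, giving the inequality.

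The main obstacle is the injectivity of the restriction-to-$F$ map on the relevant quotient together with the passage through the singularities of $F$: one must be sure that a nonzero $n$-form on $X$ which does not vanish identically on the general member $F$ restricts to a nonzero $(n-1)$-form on the desingularization $\widetilde F$, and that the adjunction inclusion $\mathcal O_X(K_X)|_F \hookrightarrow \mathcal O_F(K_F)$ survives pullback to $\widetilde F$. This is where one uses that $F$ is a general member of a covering family: by generic smoothness (in characteristic $0$) the general $F$ is smooth away from the base locus and, more to the point, one can argue on the smooth model $Y$ — a section of $K_X$ pulls back to $Y$, restricts to the smooth fibre $\widetilde F = q^{-1}(t)$ of $q$, and the adjunction/residue map $K_Y|_{\widetilde F} \to K_{\widetilde F}$ is the relevant inclusion, with the twist by $q^*(\text{something on }T)$ being trivial on a fibre. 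I expect the bookkeeping needed to identify the image of $H^0(X, K_X) \to H^0(\widetilde F, K_{\widetilde F})$ as having dimension exactly $h^0(X,K_X) - h^0(X,K_X-F)$ — i.e. checking that the kernel of this composite is exactly $H^0(X, K_X - F)$ and not something larger — to be the one delicate point, resolved by the observation that a form vanishing on the general member of a covering family vanishes on all of $X$.
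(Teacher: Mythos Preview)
Your plan is essentially the paper's argument: build a smooth total space $Y$ with maps $\mu\colon Y\to X$ and $\pi\colon Y\to T$, identify $p_g(F)$ with $h^0(E,K_E)$ for a general smooth fibre $E$, and bound this below by $h^0(F,\OO_X(K_X)|_F)$, which the exact sequence $0\to\OO_X(K_X-F)\to\OO_X(K_X)\to\OO_F(K_X)\to 0$ converts into the stated inequality. The one step you gloss over is precisely the one the paper isolates: to pass from a section of $p^*K_X$ on $Y$ to a section of $K_Y$ (and hence of $K_E=K_Y|_E$) you need the inclusion $p^*K_X\hookrightarrow K_Y$, i.e.\ the fact that $K_Y\equiv p^*K_X+R$ with $R$ effective (ramification of the generically finite map $p$); your sentence ``a section of $K_X$ pulls back to $Y$, restricts to the smooth fibre, and the adjunction map $K_Y|_{\widetilde F}\to K_{\widetilde F}$ is the relevant inclusion'' jumps from $p^*K_X|_{\widetilde F}$ to $K_Y|_{\widetilde F}$ without naming this bridge. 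Once that is made explicit, your detours through adjunction on the possibly singular $F$ become unnecessary---the paper simply chains $h^0(E,K_E)\ge h^0(E,\mu^*K_X|_E)\ge h^0(F,\OO_X(K_X)|_F)$ and is done.
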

\noi Note that although we don't assume that the $\{F_t\}$ are all linearly equivalent, the expression on the right is independent of the choice of a generic element of the family. Observe also that it can happen that equality holds in \eqref{Thmeqn}: for example one can take $X = C \times F$ where $C$ is an elliptic curve.
\begin{proof} We adapt the elementary argument proving \cite[Theorem 1.10]{BDELU}. One can construct a diagram:
\[
\xymatrix{Y\ar[d]_\pi \ar[r]^\mu &X\\
T}
\]
where $Y$ is smooth, and almost all fibres 
\[  E_t \, =_\text{def} \, \pi^{-1}(t) \ \subseteq \ Y \]
are smooth irreducible divisors mapping birationally to their images $F_t \subseteq X$. Denote by $E$ a general fibre of $\pi$, with $F = \mu(E) \subseteq X$. So by definition $p_g(F)  =   \hh{0}{E}{\OO_E(K_E)}$. Now 
\[ K_Y \ \lin \ \mu^* K_X \, + \, R, \]
where $R$ is effective, and $K_E = K_Y \mid E$. Therefore
\[ p_g(F) \ \ge \ \hh{0}{E}{\mu^*\OO_X(K_X) \mid E}. \] On the other hand, $\mu^*$ gives rise to a natural injection
\[ \HH{0}{F}{\OO_F(K_X)} \hookrightarrow \HH{0}{E}{\OO_E(\mu^*
 K_X)},\]
so we arrive finally at the inequality
\[ p_g(F) \ \ge \ \hh{0}{F}{\OO_X(K_X)\mid F}. \]
The statement then follows by using the exact sequence 
\[ 0 \lra \OO_X(K_X - F) \lra \OO_X(K_X) \lra \OO_F(K_X) \lra 0
\] to estimate $\hh{0}{F}{\OO_F(K_X)}$.
\end{proof}

\begin{proof}[Proof of Proposition \ref{ThmA}]
We apply the previous result with $F \in \linser{rH}$ for some $r \ge 1$. The right hand side of \eqref{Thmeqn} is mimimized when $r = 1$, and the lower bound follows. The upper bound follows by considering a general pencil in $\linser{H}$.
\end{proof}

\begin{remark} \textbf{(Covering families of curves)}.  By a similar argument, if $\{ C_t \}_{t\in T}$ is a family of irreducible curves of geometric genus $g$ that covers a Zariski-open subset of $X$, then
\begin{equation}  (2g  - 2) \ \ge \ \big ( K_X \cdot C \big ), \label{Cov.Fam.Curves} \end{equation}
where $C$ is a general curve in the family. 
\end{remark}

\section{The Konno invariant of an algebraic surface}

The inequality of Theorem \ref{Sect1Thm} says nothing for varieties with trivial canonical bundle. In the case of surfaces we prove here a variant that does yield non-trivial information in this case. The approach is inspired by the arguments of Konno in \cite{Konno}.

\begin{theorem} \label{SfBoundThm} Let $S$ be a smooth complex projective surface, and let $L$ be an ample line bundle on $S$. Fix a two-dimensional subspace $V \subseteq \HH{0}{S}{L}$ with only isolated base-points defining a rational pencil
\[ \phi_{|V|} : S \dra \PP^1\]
with generically irreducible fibres. If $g$ denotes the geometric genus of the general fibre, then
\begin{equation}  \left ( 2g-2 \right)  \ge \ \big( K_S \cdot L \big) \, + \, \sqrt{(L^2)}\end{equation} \label{Thm2.1Ineq} 
\end{theorem}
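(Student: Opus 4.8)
The plan is to resolve the base locus of $V$, translate the two numbers $(L^2)$ and $2g-2$ into the multiplicities of a general member of $\linser{V}$ at its base points (finite and infinitely near) by means of the classical formulae of Noether reviewed in the appendix, and then observe that a trivial inequality among these nonnegative multiplicities yields the asserted bound. In more detail: since $\phi_{|V|}$ has only isolated base points, $V$ has no fixed component, and we may choose a composition of point blow-ups $\pi\colon S'\to S$, with total transforms $e_1,\dots,e_N$ of the exceptional curves, so that the moving part of $\pi^*V$ is base-point free and defines a morphism $f\colon S'\to\PP^1$ lifting $\phi_{|V|}\circ\pi$. We will use the standard relations $e_i\cdot e_j=-\delta_{ij}$, $\pi^*(D)\cdot e_i=0$ for divisors $D$ on $S$, and $K_{S'}\num\pi^*K_S+\sum_{i=1}^N e_i$. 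Writing $m_i$ for the multiplicity of a general member $F\in\linser{V}$ at the $i$-th (possibly infinitely near) base point, the proper transform $\widetilde F$ of $F$ --- which is a general fibre of $f$ --- satisfies $\pi^*L\num\widetilde F+\sum_i m_i e_i$, and each $m_i\ge 1$ because the point in question is a genuine base point of the pencil at its stage. In characteristic $0$, generic smoothness makes $\widetilde F$ smooth, and the hypothesis that the fibres of $\phi_{|V|}$ are generically irreducible makes $\widetilde F$ connected; since $\pi$ restricts to a resolution $\widetilde F\to F$, we have $g=p_g(F)=g(\widetilde F)$.

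Next I would read off Noether's two formulae on $S'$. From $\pi^*L\cdot e_i=0$ we get $\widetilde F\cdot e_i=m_i$, and since $\widetilde F^{\,2}=0$,
\[
(L^2)\ =\ (\pi^*L)^2\ =\ \Big(\widetilde F+\sum_i m_ie_i\Big)^{\!2}\ =\ \sum_{i=1}^N m_i^2,
\]
while adjunction on $S'$ together with the canonical bundle and projection formulae gives
\[
2g-2\ =\ \widetilde F\cdot\big(\widetilde F+K_{S'}\big)\ =\ \widetilde F\cdot K_{S'}\ =\ \widetilde F\cdot\pi^*K_S+\sum_i m_i\ =\ (K_S\cdot L)+\sum_{i=1}^N m_i.
\]
Because every $m_i$ is a positive integer,
\[
\Big(\sum_i m_i\Big)^{\!2}\ =\ \sum_i m_i^2+\sum_{i\neq j}m_im_j\ \ge\ \sum_i m_i^2\ =\ (L^2),
\]
so that $\sum_i m_i\ge\sqrt{(L^2)}$; combining this with the genus formula produces $2g-2\ge(K_S\cdot L)+\sqrt{(L^2)}$, as desired.

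There is no deep obstacle once the resolution is in place --- the content is entirely classical. The points that require attention are: the bookkeeping needed to justify $K_{S'}\num\pi^*K_S+\sum e_i$ and $\pi^*L\num\widetilde F+\sum m_ie_i$ for an arbitrary tower of point blow-ups (these are exactly the ``finite and infinitely near'' Noether identities recalled in the appendix, and one may simply quote them from there); the verification that \emph{every} $m_i$ is $\ge 1$, since this positivity is precisely what upgrades the identity $\sum m_i^2=(L^2)$ into the square-root bound, and it is where the isolatedness of the base points is used; and the check that $\widetilde F$ is smooth and connected, so that $g(\widetilde F)$ really computes $p_g$ of a general fibre of $\phi_{|V|}$ and so that adjunction applies in the form used.
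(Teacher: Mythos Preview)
Your argument is correct and follows essentially the same route as the paper: resolve the base locus by a tower of point blow-ups, use Noether's identities to obtain $(L^2)=\sum m_i^2$ and $2g-2=(K_S\cdot L)+\sum m_i$, and conclude via $\sum m_i\ge\sqrt{\sum m_i^2}$. The only cosmetic difference is that you compute $2g-2$ directly by adjunction on $S'$, whereas the paper passes through $2p_a(C)-2=(K_S+L)\cdot L$ and Noether's second formula; note also that the final inequality needs only $m_i\ge 0$, not $m_i\ge 1$.
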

\begin{remark} Compare the bound appearing above in equation \eqref{Cov.Fam.Curves}. \end{remark} 
\begin{proof}
By a sequence of blowings up at points, we construct a resolution of the indeterminacies of $\linser{V}$:
\[
\xymatrix{S^\pr\ar[d]_\pi \ar[r]^\mu &S\\
\PP^1 }.
\]
We can suppose that the centers of the blowings-up are the (actual and infinitely near) base-points of $\linser{V}$. Let $m_i$ denote the multiplicity of the proper transform of a general curve $C \in \linser{V}$ at the $i^{\text{th}}$ base-point, and denote by $C^\pr$ proper transform of $C$ in $S^\pr$, so that $C^\pr$ is a general fibre of $\pi$ and $g = g(C^\pr)$. Then by a classical theorem of Noether, which we recall in the Appendix  (Proposition \ref{Noether.Prop}), one has
\begin{align*}
\big( C \cdot C \big)_S\ = \ (C^\pr \cdot C^\pr)_{S^\pr}+ \sum m_i^2,
\end{align*}i.e.
\[ \big(L^2 \big) \ = \ \sum m_i^2 \ . \tag{*} \]
Furthermore,
\[ \left(2 p_a(C) - 2\right) \ = \ (2 g - 2) \, + \, \sum m_i(m_i-1). \]
But $ (2p_a(C) - 2)=  (K_X+ L) \cdot L$,  so we find that
\[  \left( 2g - 2 \right) \ = \ \big( L \cdot K_X \big) \, + \, \sum m_i . \]
The stated inequality  \eqref{Thm2.1Ineq} then follows from (*) and the fact that $\sum x_i \ge \sqrt{\sum x_i^2}$ for any non-negative real numbers $x_i$.
\end{proof}

\begin{proof}[Proof of Theorem \ref{K3ThmIntro}]
Let $(S,L) = (S_d, L_d)$ be a polarized K3 surface of genus $d$, so that 
\[
\big( L^2 \big) \ = \ 2d - 2 \ \ , \ \ \hh{0}{S}{L} \ = \ d +1.
\] We assume that Pic$(S) = \ZZ \cdot [L]$, and consider a rational pencil 
\[ \phi : S \dra \PP^1 \]
of curves of geometric genus $g$. Then for some $r \ge 1$, $\phi$ is defined by a two-dimensional subspace $V \subseteq \HH{0}{S}{rL}$ with isolated base points. Theorem \ref{SfBoundThm} implies that 
\[ 2g - 2 \ \ge \  r \cdot \sqrt{2d - 2} \ \ge \ \sqrt{2d -2}, \]
so $\Konno(S) \ge C_1 \cdot \sqrt{d} $ for suitable $C_1 > 0$. 

It remains to construct a pencil of small genus, for which we use an argument of Stapleton \cite{Stapleton}. Specifically, fix a point $x \in S$, and choose an integer $m \ge 1 $ so that 
\[ (m+2)^2 \ \ge \ 2d \ \ge \ (m+1)^2 . \tag{*}\]
It follows from (*) that 
\[ (d+1) \, - \, \frac{m(m+1)}{2} \ \ge \ 2, \]
and therefore \[\hh{0}{S}{L \otimes I_x^m} \ge 2 \]
All the curves in $\linser{L \otimes I_x^m}$ are reduced and irreducible, so we get a pencil of curves of geometric genus $g$ with
\[ (2g - 2) \ \le \ (2d-2) \, - \, m(m-1). \] But $2d - m^2 \le 4m + 4$ thanks to (*), and one then finds that $(2g - 2) \le 3 \cdot \sqrt{2d}$. Thus we have constructed a pencil of geometric genus $\le C_2 \cdot \sqrt{d}$ for suitable $C_2$, as required. 
\end{proof}

\begin{remark} (\textbf{Abelian surfaces}). Let $A$ be an abelian surface with a polarization of type $(1,d)$ that generates the N\'eron-Severi group of $A$. Then essentially the same argument shows that 
\[ \Konno(A) \ \in \ \Theta(d).\]

\end{remark}

\begin{remark} (\textbf{Non-linear families of curves on K3 surfaces)}.
One can view Theorem \ref{K3ThmIntro} as asserting there are no \textit{lines}  
\[ \PP^1 \, \subseteq \, \linser{L_d} \]
contained in the locus of curves having small geometric genus. It would be interesting to know whether one can also rule out the presence of rational curves of higher degree. For example, a general polarized K3 surface $(S, L_d)$ contains a (non-compact) two-dimensional family of nodal curves of geometric genus $p_g = 2$. Does this surface contain any rational curves? More generally, do  the Severi varieties parametrizing nodal curves of small geometric genus in $\linser{L_d}$ exhibit hyperbolic tendencies?
\end{remark}

\begin{remark} (\textbf{Calabi-Yau or hyper-K\"ahler manifolds}). Can one establish non-trivial lower bounds on the Konno invariant of a Calabi-Yau or hyper-K\"ahler manifold?
\end{remark}

\appendix
\section{Noether's formulas for linear series on surfaces}

We quickly review Noether's classical approach to invariants of linear series on surfaces, upon which the proof of Theorem \ref{SfBoundThm} was based. 
Besides accomodating the convenience of the reader, our motivation is to show how these ideas lead to quick proofs of results of Deligne-Hoskin and Lech.

Let $S$ be a smooth projective surface, $L$ a line bundle on $S$, and $V\subseteq \HH{0}{S}{L}$ a vector space of dimension $\ge 2$ defining a linear series with only isolated base-points. Given a point $x \in S$, the \textit{multiplicity} or \textit{order of vanishing} $m$ of $\linser{V}$ at $x$ is the multiplicity at $x$ of a general curve $C \in \linser{V}$. Equivalently, if $\mu_1= \textnormal{bl}_x : S_1 \lra S$ is the blowing-up of $S$ at $x$ with exceptional divisor $E$, $m$ is the unique integer such that 
\[ V_1 \, =_\text{def} \, \mu_1^*(V) (-mE) \ \subseteq \ \HH{0}{S_1}{\mu^*L \otimes \OO_{S_1}(-mE)}\]
again has at most isolated base-points. We call $\linser{V_1}$ the \textit{proper transform} of $\linser{V}$ and $L_1 =_{\text{def}}{\mu^*L \otimes \OO_{S_1}(-mE)} $ the \textit{proper transform} of $L$ on $S_1$.

Noether's result is the following:
\begin{proposition} \label{Noether.Prop}
Given $V \subseteq \HH{0}{S}{L}$ as above, let
\[ \mu : S^\pr \lra S \]
be a log resolution of $\linser{V}$ constructed as a sequence of blowings-up at points, so that the proper transform $V^\pr$ of $V$ on $S^\pr$ is base-point free. Denote by $m_i$ the multiplicity of the proper transform of $\linser{V}$ at the center of the $i^{\text{th}}$ blow-up. Then:
\begin{itemize}
\item[$(i)$.] Writing $L^\pr$ for the proper transform of $L$ on $S^\pr$, one has \[ \big (L^\pr \cdot L^\pr \big)_{S^\pr} \ = \ \big( L \cdot L \big)_S \, - \, \sum m_i^2. \]
\item[$(ii)$.] Let $C \in \linser{V}$ be a general curve, and let $C^\pr \in \linser{V^\pr}$ be its proper transform on $S^\pr$. Then $C^\pr$ is smooth, and 
\[ (2g(C^\pr)-2) \ = \ (2p_a(C)-2) - \sum m_i(m_i-1).\]
\end{itemize}
\end{proposition}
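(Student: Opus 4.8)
The plan is to argue by induction on the number $N$ of blowings-up in the factorization $\mu : S' = S_N \to S_{N-1} \to \cdots \to S_1 \to S_0 = S$, which reduces both assertions to the case $N=1$ of a single blow-up $\mu_1 = \textnormal{bl}_x : S_1 \to S$ at a point $x$, with exceptional curve $E \subseteq S_1$ and $m = \mult_x \linser{V}$. For the base case I would record the standard identities $(E^2) = -1$, $\big(\mu_1^* D \cdot E\big) = 0$ and $\big(\mu_1^* D_1 \cdot \mu_1^* D_2\big) = (D_1 \cdot D_2)$ for divisors $D, D_1, D_2$ on $S$, together with $K_{S_1} \equiv \mu_1^* K_S + E$. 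Since the proper transform of $L$ is $L_1 = \mu_1^* L - mE$, expanding $(L_1 \cdot L_1)$ with these identities gives $(L_1 \cdot L_1)_{S_1} = (L \cdot L)_S - m^2$ at once. For $(ii)$ with $N=1$: a general $C \in \linser{V}$ has multiplicity exactly $m$ at $x$, so its proper transform on $S_1$ is $C_1 = \mu_1^* C - mE$; substituting this together with $K_{S_1} \equiv \mu_1^* K_S + E$ into adjunction $2 p_a(C_1) - 2 = (C_1 + K_{S_1}) \cdot C_1$ on $S_1$ yields $2 p_a(C_1) - 2 = \big(2 p_a(C) - 2\big) - m(m-1)$.

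For the inductive step I would apply the proposition to the triple $(S_1, L_1, V_1)$, where $V_1 = \mu_1^* V(-mE)$ is the proper transform of $V$ — which again has only isolated base-points, by the definition of $m$ — and where $S' \to S_1$ is the remaining sequence of $N-1$ point blow-ups, with centers $p_2, \dots, p_N$ and associated multiplicities $m_2, \dots, m_N$. Under the canonical identification of $\linser{V}$ with $\linser{V_1}$, the member of $\linser{V_1}$ determined by a section $s$ is the proper transform of the member of $\linser{V}$ determined by $s$ whenever $s$ cuts out a curve of multiplicity $m$ at $x$, so a general member of $\linser{V_1}$ may be taken to be the proper transform $C_1$ of a general member $C \in \linser{V}$. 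The inductive hypothesis then gives $(L' \cdot L')_{S'} = (L_1 \cdot L_1)_{S_1} - \sum_{i \ge 2} m_i^2$ and $2 g(C') - 2 = \big(2 p_a(C_1) - 2\big) - \sum_{i \ge 2} m_i(m_i - 1)$, and combining these with the two one-blow-up formulas above proves $(i)$ and $(ii)$. The smoothness of $C'$ is handled separately and immediately: $V'$ is base-point free on the smooth projective surface $S'$, so a general member of $\linser{V'}$ is smooth by Bertini's theorem (we are over $\CC$), hence $g(C') = p_a(C')$ and the genus formula follows.

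The one point that requires genuine care — and the only real obstacle — is the repeated appeal to the word ``general''. One must arrange that a \emph{single} curve $C \in \linser{V}$ be chosen so that, simultaneously, its successive proper transforms $C_0, C_1, \dots, C_{N-1}$ on $S_0, \dots, S_{N-1}$ all have the expected multiplicity $m_i$ at the center $p_i$ of the $i$-th blow-up, and the final proper transform $C' = C_N$ is smooth. This causes no real trouble: the multiplicity locus at each stage is a nonempty Zariski-open subset of the irreducible parameter space $\PP(V)$ (nonempty precisely because $m_i$ is \emph{defined} as the generic multiplicity of the proper transform of $\linser{V}$ at $p_i$), the smoothness locus is open and nonempty by Bertini, and a nested intersection of finitely many such sets is again nonempty and open. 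In practice this bookkeeping is carried out automatically by the induction, since at each level the genericity of $C_1$ in $\linser{V_1}$ is exactly what is needed to invoke the inductive hypothesis for $(S_1, L_1, V_1)$; making sure the implication ``general $C$ in $\linser{V}$'' forces ``general $C_1$ in $\linser{V_1}$'' is valid at every step is the main thing to verify.
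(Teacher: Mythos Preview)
Your argument is correct. The inductive reduction to a single blow-up, the one-step computations using $(E^2)=-1$, $(\mu_1^*D\cdot E)=0$, $K_{S_1}\equiv\mu_1^*K_S+E$, and adjunction, and the Bertini argument for smoothness of $C'$ are all fine; your care with the genericity bookkeeping is well placed and the point that ``general $C$ in $\linser{V}$'' forces ``general $C_1$ in $\linser{V_1}$'' is exactly what makes the induction go through.

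The paper's proof is organized a bit differently: instead of inducting on the number of blow-ups, it works directly on $S'$ using the \emph{total transforms} $\widetilde{E_i}$ of the exceptional divisors. The key observation is that these satisfy $(\widetilde{E_i}\cdot\widetilde{E_j})=-\delta_{ij}$ and $(\widetilde{E_i}\cdot\mu^*B)=0$ for any line bundle $B$ on $S$, so that $L'=\mu^*L-\sum m_i\widetilde{E_i}$ and $K_{S'}\equiv\mu^*K_S+\sum\widetilde{E_i}$ give both formulas in one stroke by expanding the relevant intersection numbers. This is really the same computation as yours, globalized: the orthogonality of the $\widetilde{E_i}$ is itself proved by exactly the induction you carry out, so the two arguments are equivalent in content. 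The paper's packaging is a little slicker and avoids the genericity discussion (which it absorbs into the single Bertini appeal at the end), while your version is more explicit about what happens at each stage.
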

\begin{proof}
Let $\widetilde{E_i}$ be the \textit{total transform} on $S^\pr$ of the exceptional divisor created at the $i^{th}$ blow-up. Then
\[   \big(\widetilde E_i \cdot \widetilde E_j) \ = \ \begin{cases} -1  \ &\text{ if } \ i \, = \, j \\ \ 0 &\text{ if } i \, \ne \, j \end{cases}.\]
Moreover, \[
\big( \widetilde{E}_i \cdot \mu^* B \big) \  = \ 0 \]
for any line bundle $B$ on $S$.
On the other hand, by definition of the $m_i$:
\[ L^\pr \ = \ \mu^*L \otimes \OO_{S^\pr}(-\sum m_i \widetilde E_i), \]
and (i) follows. For (ii), note that 
\begin{equation} K_{S^\pr}\ \lin\ \mu^* K_S \, + \, \sum \widetilde E_i, \label{Rel.K.Eqn} \end{equation}
and apply the adjunction formula. The smoothness of $C^\pr$ follows from the fact that it is a general member of a base-point free linear system.
\end{proof}

We next show how these ideas lead to a very quick proof of a formula of Deligne \cite[Thm. 2.13]{Deligne} and Hoskin \cite{Hoskin}.
\begin{proposition}  Let 
\[ \fra \ \subseteq \ \OO_S \]
be an integrally closed ideal of finite colength cosupported at a point $x \in S$, and denote by $m_i$ the orders of vanishing of $\fra$ at $x$ and all infinitely near base-points of $\fra$. Then
\[
\textnormal{colength}(\fra) \ = \ \frac{\sum m_i(m_i+1)}{2}. 
\]
\end{proposition}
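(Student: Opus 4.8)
The plan is to reduce the statement to Noether's formula (Proposition \ref{Noether.Prop}) by choosing a suitable linear series attached to $\fra$. First I would pick a very ample line bundle $A$ on $S$ such that $A \otimes \fra$ is globally generated, and let $V \subseteq \HH{0}{S}{L}$, with $L = A^{\otimes k}$ for $k \gg 0$, be a sufficiently general two-dimensional (or higher-dimensional) subspace of the space of sections vanishing along $\fra$. Since $\fra$ is cosupported at the single point $x$, away from $x$ the series $\linser{V}$ has no base points once $L$ is positive enough, and at $x$ (and its infinitely near points) the generic vanishing orders of $\linser{V}$ agree with the $m_i$ attached to $\fra$ — this is where integral closedness of $\fra$ is used, so that a general section achieves exactly the predicted multiplicities and the log resolution of $\fra$ coincides with the log resolution $\mu : S' \to S$ of $\linser{V}$. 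Concretely, I would take $\mu$ to be the normalized blow-up of $\fra$ (or any sequence of point blow-ups resolving it), with exceptional configuration $\widetilde E_i$ and $\fra \cdot \OO_{S'} = \OO_{S'}(-\sum m_i \widetilde E_i)$.

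Next I would compute $\textnormal{colength}(\fra) = \dim_{\CC} \OO_S/\fra$ in two ways. One way is via the blow-up: there is a standard identity expressing this colength as an intersection number on $S'$, of the form $\textnormal{colength}(\fra) = -\tfrac12\big((\sum m_i \widetilde E_i)^2 + (\sum m_i \widetilde E_i)\cdot K_{S'/S}\big)$, coming from $\chi(\OO_S/\fra) = \chi(\OO_{S'}) - \chi(\OO_{S'}(-\sum m_i\widetilde E_i))$ together with Riemann–Roch on $S'$ and the fact that $R^1\mu_* = 0$ for the resolution of an integrally closed ideal. Using the orthogonality relations $(\widetilde E_i \cdot \widetilde E_j) = -\delta_{ij}$, $(\widetilde E_i \cdot \mu^* B) = 0$, and $K_{S'} \equiv \mu^* K_S + \sum \widetilde E_i$ from \eqref{Rel.K.Eqn}, the right-hand side collapses exactly to $\tfrac12\sum(m_i^2 + m_i) = \tfrac12 \sum m_i(m_i+1)$, which is the assertion. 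Alternatively — and this is the route most in the spirit of the appendix — I would take the difference of the two formulas in Proposition \ref{Noether.Prop}: part $(i)$ contributes $\sum m_i^2$ and part $(ii)$ contributes $\sum m_i(m_i-1)$, and combining these with the interpretation of colength as $\tfrac12\big((L^2) - (L'^2)\big) - \tfrac12\big((2p_a(C)-2) - (2g(C')-2)\big)$ type bookkeeping isolates $\sum m_i(m_i+1)/2$.

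I expect the main obstacle to be the bookkeeping lemma identifying $\textnormal{colength}(\fra)$ with the intersection-theoretic expression on $S'$ — that is, the vanishing $R^1\mu_*\OO_{S'}(-\sum m_i \widetilde E_i) = 0$ and the precise Euler-characteristic computation. This is where integral closedness genuinely enters (for a non-integrally-closed ideal the resolution picture and the colength count diverge), and it is the one point that is not purely formal manipulation of the relations already recorded in Proposition \ref{Noether.Prop}. Once that lemma is in hand, the remainder is a short computation with the exceptional lattice and the canonical-bundle formula \eqref{Rel.K.Eqn}, entirely parallel to the proof of Theorem \ref{SfBoundThm}.
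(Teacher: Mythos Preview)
Your proposal is correct, and your ``one way'' in the second paragraph is exactly the paper's proof: compute $\textnormal{colength}(\fra) = \chi(\OO_S) - \chi(\fra)$, push to the log resolution using $\fra = \mu_*\OO_{S'}(-A)$ (integral closedness) and $R^1\mu_*\OO_{S'}(-A)=0$ (Lipman, or the elementary argument you allude to), then apply Riemann--Roch with the relations $(\widetilde E_i\cdot\widetilde E_j)=-\delta_{ij}$ and $K_{S'}\equiv\mu^*K_S+\sum\widetilde E_i$. The linear-series scaffolding in your first paragraph and the ``alternatively'' detour through parts (i) and (ii) of Proposition~\ref{Noether.Prop} are unnecessary --- the paper works directly with $\fra$ and its resolution without ever choosing an ambient $\linser{V}$ --- but they do no harm.
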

\begin{proof} From the exact sequence
\[ 0 \lra \fra \lra \OO_S \lra \OO_S/\fra \lra 0 \]
we see that \[\textnormal{colength} (\fra) \ = \ \chi(S, \OO_S) \, - \, \chi(S, \fra). \]
Now pass to a log resolution $\mu : S^\pr \lra S$ of $\fra$ so that
\[  \fra \cdot \OO_{S^\pr} \ = \ \OO_{S^\pr}(-A) \ \ , \ \ \text{with} \ A = \sum m_i \widetilde E_i.  \]
Then $\OO_{S^\pr}(-A)$ is globally generated with respect to $\mu$, so by a theorem of Lipman \cite[Theorem 12.1]{Lipman} $R^1\mu_*\OO_{S^\pr}(-A) = 0$.\footnote{In our setting, the vanishing in question is very elementary. In fact, the question being local, one can replace $S$ by an affine neighborhood of $x$, so that $\OO_{S^\pr}(-A)$ is globally generated. Choose a general section $s \in \Gamma\big ( \OO_{S^\pr}(-A)\big)$ cutting out a curve $\Gamma \subseteq S^\pr$. Then $\Gamma$ is finite over $S$, so the vanishing of $R^1\mu_*\OO_{S^\pr}(-A)$ follows from the exact sequence $0 \lra \OO_{S^\pr} \lra \OO_{S^\pr}(-A) \lra \OO_\Gamma(-A) \lra 0$. } Moreover $\fra = \mu_* \OO_{S^{\pr}}(-A)$ thanks to the integral closure of $\fra$, and hence
\[ \chi(S, \fra) \ = \ \chi(S^\pr, \OO_{S^\pr}(-A)).\] The statement then follows by using \eqref{Rel.K.Eqn} and Riemann-Roch to calculate:
\begin{align*}
\textnormal{colength}(\fra) \ &= \ \chi(S^\pr, \OO_{S^\pr}) \, - \, \chi(S^\pr, \OO_{S^\pr}(-A)) \\ &= \ \chi(S^\pr, \OO_{S^\pr}) \, - \, \left( \frac{ \big( -A \cdot (-A - K_{S^\pr}) \big)}{2} + \chi(S^\pr, \OO_{S^\pr})\right) \\ &= - \,\frac{ \Big( (-\sum m_i \widetilde E_i) \cdot \big( -\
\mu^* K_S -\sum (m_i +1) \widetilde E_i\big)\Big)}{2}\\ &= \ \frac{\sum m_i(m_i+1)}{2},
\end{align*}
as required.
\end{proof}

Finally, we note that the Proposition implies the two-dimensional case of an inequality of Lech \cite{Lech}.
\begin{corollary} \label{Lech}
Let $\fra \subseteq \OO_S$ be an ideal of finite colength. Then
\[ e(\fra) \, + \, e(\fra)^{\tfrac{1}{2}} \  \le \ 2 \cdot \textnormal{colength}(\fra), \]
where $e(\fra)$ denotes the Samuel multiplicity of $\fra$.  In particuar, 
\[ e(\fra)   \  \le \ 2 \cdot \textnormal{colength}(\fra). \]
\end{corollary}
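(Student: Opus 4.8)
The plan is to reduce the general finite-colength ideal to the integrally closed case, where the previous Proposition applies directly. First I would observe that both sides behave well under integral closure: the Samuel multiplicity satisfies $e(\fra) = e(\ol{\fra})$, since passing to the integral closure does not change the Hilbert--Samuel multiplicity (reductions have the same multiplicity), while $\textnormal{colength}(\ol{\fra}) \le \textnormal{colength}(\fra)$ because $\fra \subseteq \ol{\fra}$. Hence it suffices to prove the inequality $e(\fra) + e(\fra)^{1/2} \le 2\,\textnormal{colength}(\fra)$ when $\fra$ is integrally closed. There is also no loss in assuming $\fra$ is cosupported at a single point $x$: if $\fra$ has several cosupport points, both the colength and the Samuel multiplicity are sums of the corresponding local contributions, and the inequality $a + \sqrt{a} \le b$, $a' + \sqrt{a'} \le b'$ gives $(a+a') + \sqrt{a+a'} \le (a+a') + \sqrt a + \sqrt{a'} \le b + b'$.

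So assume $\fra \subseteq \OO_S$ is integrally closed and cosupported at a point $x$, with orders of vanishing $m_i$ at $x$ and its infinitely near points, as in the Proposition. The key remaining input is the identification of the Samuel multiplicity in these terms, namely $e(\fra) = \sum m_i^2$. This follows from part $(i)$ of Proposition \ref{Noether.Prop} together with the interpretation of $e(\fra)$ as the self-intersection drop: on the log resolution $\mu : S^\pr \to S$ with $\fra\cdot\OO_{S^\pr} = \OO_{S^\pr}(-A)$, $A = \sum m_i \widetilde E_i$, one has $e(\fra) = -(A^2) = -\big(\sum m_i \widetilde E_i\big)^2 = \sum m_i^2$, using the intersection relations $(\widetilde E_i \cdot \widetilde E_j) = -\delta_{ij}$ recorded in the proof of the Proposition. (Alternatively one can cite this as the standard formula for the multiplicity of a complete ideal on a smooth surface.) Granting this, the Proposition gives
\[
2\,\textnormal{colength}(\fra) \ = \ \sum m_i(m_i+1) \ = \ \sum m_i^2 \, + \, \sum m_i \ = \ e(\fra) \, + \, \sum m_i,
\]
and since all $m_i \ge 0$ we have $\sum m_i \ge \sqrt{\sum m_i^2} = \sqrt{e(\fra)} = e(\fra)^{1/2}$, exactly as in the proof of Theorem \ref{SfBoundThm}. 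This yields $e(\fra) + e(\fra)^{1/2} \le 2\,\textnormal{colength}(\fra)$, and dropping the middle term gives the cruder bound $e(\fra) \le 2\,\textnormal{colength}(\fra)$.

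The main obstacle is purely expository rather than substantive: one must justify the two reduction steps (stability of $e$ and monotonicity of colength under integral closure, and additivity over the cosupport) cleanly, and pin down the formula $e(\fra) = \sum m_i^2$ — the latter is the place where a reader wants either a precise citation or the one-line self-intersection argument above. Once those are in place the corollary is an immediate consequence of Noether's colength formula and the elementary inequality $\sum x_i \ge \sqrt{\sum x_i^2}$, mirroring the proof of Theorem \ref{SfBoundThm} almost verbatim.
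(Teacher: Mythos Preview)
Your proposal is correct and follows essentially the same route as the paper: reduce to a single cosupport point and to the integral closure, then use the Deligne--Hoskin formula $2\,\textnormal{colength}(\fra) = \sum m_i(m_i+1)$ together with $e(\fra) = -(A^2) = \sum m_i^2$ and the inequality $\sum m_i \ge \sqrt{\sum m_i^2}$. Your write-up is in fact slightly more explicit than the paper's about the two reduction steps and the identification of $e(\fra)$ with $\sum m_i^2$.
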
 
\begin{remark} The first inequality is the  two-dimensional smooth case of \cite[(1.1)]{Huneke}.
\end{remark}
\begin{proof}
We may assume that $\fra$ is cosupported at a single point. Furthermore, if $\overline{\fra} \subset \OO_X$ denotes the integral closure of $\fra$, then
\[  e(\overline{\fra}) \, = \, e(\fra) \ \ \text{ and } \ \ \textnormal{colength}(\overline{\fra}) \, \le \, \textnormal{colength}(\fra). \]
Thus we may assume in addition that $\fra$ is integrally closed, putting us in the setting of the previous result. Keeping notation as in the proof of that statement, one has
\begin{align*}
e(\fra) \ = \  - \, \big( A \cdot A\big) \  &= \ \sum m_i^2 \\
\textnormal{colength}(\fra) \ &= \ \frac{\sum m_i(m_i+1)}{2}.
\end{align*}
Recalling again that $\sum m_i \ge \sqrt{\sum m_i^2}$, the required inequality follows.
\end{proof}

 %
 %
 %
 %

 \end{document}